\numberwithin{equation}{section}
\theoremstyle{plain}
  \newtheorem{theorem}{Theorem}[section]
  \newtheorem*{theorem*}{Theorem}
  \newtheorem{proposition}[theorem]{Proposition}
  \newtheorem{fact}[theorem]{Fact}
  \newtheorem*{fact*}{Fact}
\theoremstyle{definition}
  \newtheorem{definition}[theorem]{Definition}
\theoremstyle{remark}
  \newtheorem{remark}{Remark}[section]
  \newtheorem*{acknowledgements}{Acknowledgements}
  \newtheorem{example}[theorem]{Example}
\numberwithin{equation}{section}
\newcommand{\R}{{\mathbb{R}}}
\renewcommand{\phi}{\varphi}
\renewcommand{\epsilon}{\varepsilon}
\newcommand{\inner}[2]{\left\langle{#1},{#2}\right\rangle}
\DeclareMathOperator{\Vol}{Vol}
\DeclareMathOperator\Ric{Ric}
\DeclareMathOperator\II{II}
\newcommand{\setm}{\,;\,}
\newcommand{\Nabla}{\overline{\nabla}}
\newcommand{\Ell}{\vb*{\ell}}
\newcommand{\vbp}{\vb*{p}}
\newcommand{\vbq}{\vb*{q}}
\newcommand{\vbH}{\vb*{H}}
\title[]{The volume of marginally trapped submanifolds in a Lorentzian manifold satisfying \\ the null energy condition}
\author[]{Riku Kishida}
\address[Riku Kishida]{Department of Mathematical and Computing Sciences, Institute of Science Tokyo, Tokyo 152-8552, Japan}
\email{kishida.r.1632@m.isct.ac.jp}
\date{June 8, 2025.}
\subjclass[2020]{Primary 53C42; Secondly 53B30, 53C50.}
\keywords{marginally trapped, null hypersurface, null energy condition}
\begin{document}

\begin{abstract}
  In this paper, we focus on a marginally trapped submanifold $f:\Sigma^n\to M^{n+2}_1$ in a Lorentzian manifold $M^{n+2}_1$.
  We show that $f$ lies in a certain null hypersurface $\mathcal{N}_f^{n+1}$ in $M^{n+2}_1$ and $f$ has a locally volume-maximizing property in $\mathcal{N}^{n+1}_f$ if $M^{n+2}_1$ satisfies the null energy condition.
\end{abstract}

\maketitle

\section*{Introduction}

Let $\Sigma^n$ be an $n$-dimensional smooth manifold, and let $\Lambda^{n+1}$ be the $(n+1)$-dimensional light-cone in the $(n+2)$-dimensional Minkowski spacetime $\R^{n+2}_1$ with signature $(-+\dots+)$.
In author's previous work \cite{Kishida24}, the first and the second variational formulas for a space-like immersion $\vbp:\Sigma^n\to\Lambda^{n+1}$ were given, and the null hypersurface $\mathcal{N}^{n+1}_{\vbp}$, which is called the \textit{null-space} with respect to $\vbp$, was constructed by using the dual map $\vbq:\Sigma^n\to\Lambda^{n+1}$ of $\vbp$.
Moreover, the followings were shown in \cite{Kishida24}.
\begin{itemize}
  \item $\vbp$ is a critical point of the volume functional for $\vbq$-direction variations if and only if the scalar curvature with respect to the induced metric by $\vbp$ is identically zero.
  \item In that case, $\vbp$ has a locally volume-maximizing property in the null-space $\mathcal{N}^{n+1}_{\vbp}$ (more precisely, for any variation of $\vbp$ in $\mathcal{N}_{\vbp}^{n+1}$ the first variation is zero and the second variation is non-positive).
\end{itemize}
In this paper, we give a significant generalization of these results.

Let $M^{n+2}_1$ be an $(n+2)$-dimensional Lorentzian manifold, and we denote by $\inner{\cdot}{\cdot}$ the Lorentzian metric of $M^{n+2}_1$.
We say that a space-like immersion $f:\Sigma^n\to M^{n+2}_1$ is a \textit{marginally trapped submanifold} if there exist a light-like normal vector filed $\Ell_+$ satisfying $\inner{\Ell_+}{\vbH}$ is identically zero, where $\vbH$ is the mean curvature vector field of $f$.
A space-like immersion $\vbp:\Sigma^n\to\Lambda^{n+1}$ with vanishing scalar curvature is a typical example of marginally trapped submanifolds in $\R^{n+2}_1$ (cf. Example~\ref{exam_1}).
If $n=2$, such an immersion $f$ is called a \textit{marginally trapped surface}.
In Honda-Izumiya \cite{HI15}, many examples of marginally trapped surfaces are given for the case of $M^4_1:=\R^4_1$.
Moreover, there are many examples of marginally trapped submanifolds in general dimensions (cf. Example~\ref{exam_2}).

For a marginally trapped surface in a $4$-dimensional Lorentzian manifold $M^4_1$, Andersson-Metzger \cite{AM10} gives a formula for the derivative of $\vbH$ with respect to a given variational vector field.
On the other hand, Honda-Izumiya \cite{HI15} gives the first and the second variational formulas for the volume of a marginally trapped surface in $\R^4_1$.

Regarding these results,
we begin to compute the first and the second variational formulas for a marginally trapped submanifold $f:\Sigma^n\to M^{n+2}_1$ without any restriction of the dimension $n$.
The goal of this paper is to show that the results of the author's previous work still hold true even if the following substitutions are made:
\begin{itemize}
  \item The ambient space $\R^{n+2}_1$ is an example of a Lorentzian manifold $M^{n+2}_1$ satisfying the null energy condition. So we replace $\R^{n+2}_1$ with $M^{n+2}_1$.
  \item A space-like immersion $f:\Sigma^n\to\Lambda^{n+1}$ with vanishing the scalar curvature is an example of a marginally trapped submanifold in $\R^{n+2}_1$. So we replace $f$ with a marginally trapped submanifold in $M^{n+2}_1$.
\end{itemize}
With this substitution, we construct the null-space $\mathcal{N}_f^{n+1}$ for a marginally trapped submanifold $f:\Sigma^n\to M^{n+2}_1$.
The main result of this paper is as follows:

\begin{theorem*}[A generalization of {\cite[Theorem in Introduction]{Kishida24}}]
  Let $M^{n+2}_1$ be an $(n+2)$-dimensional Lorentzian manifold satisfying the null energy condition {\rm (}called NEC, that is, $M^{n+2}_1$ is light-like geodesically complete and the Ricci tensor of $M^{n+2}_1$ is non-positive along light-like directions{\rm )}, and let $f:\Sigma^n\to M^{n+2}_1$ be a marginally trapped submanifold.
  Then, for any compactly supported variation of $f$ in the null-space $\mathcal{N}_f^{n+1}$, the first variation on the volume is zero and the second variation is non-positive.
  Therefore, $f$ is a locally volume-maximizing hypersurface in $\mathcal{N}_f^{n+1}$.
\end{theorem*}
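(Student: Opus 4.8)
The plan is to reduce the problem to the evolution of the Riemannian volume element of the slices along the null geodesic generators of $\mathcal{N}_f^{n+1}$, and then to read off the sign of the second variation from a Raychaudhuri-type focusing identity. Since $\Ell_+$ is, up to scale, the unique null direction of the rank-two Lorentzian normal bundle of $f$ that is tangent to $\mathcal{N}_f^{n+1}$, any compactly supported variation of $f$ inside $\mathcal{N}_f^{n+1}$ has variational vector field of the form $V=V^\top+\phi\,\Ell_+$, with $V^\top$ tangent to $\Sigma^n$ and $\phi\in C^\infty_c(\Sigma^n)$. The tangential part $V^\top$ merely reparametrizes the image and contributes a divergence that integrates to zero, so it may be discarded; concretely I would realize the variation as $F_t(x)=\exp_{f(x)}\!\big(t\,\phi(x)\,\Ell_+(x)\big)$, extending $\Ell_+$ along each generator as an affinely parametrized null geodesic field (so that $\Nabla_{\Ell_+}\Ell_+=0$), and take $\lambda$ to be the affine parameter.

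First I would establish the first variation. Writing $\theta_+:=\operatorname{div}_{\Sigma}\Ell_+$ for the null expansion, the standard evolution of the area form gives $\partial_t\,dA_t=\phi\,\theta_+\,dA_t$, and a short computation identifies $\theta_+$ with $-\inner{\vbH}{\Ell_+}$ up to sign. The marginally trapped condition $\inner{\Ell_+}{\vbH}\equiv0$ therefore forces $\theta_+\equiv0$, so the first variation $\int_{\Sigma}\phi\,\theta_+\,dA$ vanishes for every $\phi$; in particular $f$ is a critical point of the volume among variations inside $\mathcal{N}_f^{n+1}$, and the second variation is a well-defined Hessian on the normal fields $\phi\,\Ell_+$.

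For the second variation I would differentiate $\partial_t\,dA_t=\phi\,\theta_+\,dA_t$ once more at $t=0$. Because $\theta_+\equiv0$ on $\Sigma^n$ at $t=0$, the only surviving term is $\int_{\Sigma}\phi^2\,(\partial_\lambda\theta_+)\,dA$. Here the decisive structural point is that $\Ell_+$ is \emph{null}: in expanding the induced metric $\inner{\partial_iF_t}{\partial_jF_t}$, the would-be kinetic contribution (proportional to $\inner{\Ell_+}{\Ell_+}=0$) and the mixed terms $\partial_i\phi\,\inner{\Ell_+}{e_j}$ both vanish identically, so no positive-definite gradient term appears. What remains is exactly $\partial_\lambda\theta_+$, which I would evaluate by the Raychaudhuri equation for the generating null congruence. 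This congruence is hypersurface-orthogonal, since it generates $\mathcal{N}_f^{n+1}$, so its twist vanishes, and at $\theta_+=0$ the equation reduces to $\partial_\lambda\theta_+=-|\sigma|^2-\Ric(\Ell_+,\Ell_+)$, where $\sigma$ is the shear, i.e.\ the (here trace-free) null second fundamental form $\inner{\II(\cdot,\cdot)}{\Ell_+}$. The NEC makes the curvature term contribute with the correct sign, and $-|\sigma|^2\le0$ is automatic, whence the integrand is non-positive and $\delta^2\Vol\le0$.

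I expect the main obstacle to be organizing the null geometry cleanly: choosing the affine parametrization and the extension of $\Ell_+$ off $\Sigma^n$ so that the degenerate induced metric of $\mathcal{N}_f^{n+1}$ and the derivative $\partial_\lambda\theta_+$ are handled without ambiguity, and verifying rigorously — rather than schematically — that the nullity of $\Ell_+$ annihilates every positive term in the second variation. Light-like geodesic completeness is used to guarantee that the generators, and hence the variations $F_t$, are defined on the full range needed, while the precise sign bookkeeping in the Raychaudhuri identity must be matched to the paper's curvature conventions so that the NEC yields a non-positive, and not merely bounded, contribution. Once these are in place, the focusing inequality gives the stated local volume-maximizing property.
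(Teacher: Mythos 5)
Your argument is correct in substance and lands on exactly the quadratic form the paper obtains, namely $-\int_{\Sigma^n}\phi^2\bigl(\tr(A_+^2)+\Ric(\Ell_+,\Ell_+)\bigr)dV_0$ (your $|\sigma|^2$ equals $\tr(A_+^2)$ because $\tr(A_+)=\theta^+=0$), but you reach it by a different route. The paper computes the second variation by specializing the general Schoen--Wolfson formula \eqref{eq_second_variation_general} term by term, showing that the nullity of $\Ell_+$ kills the $(\Nabla_{e_i}X)^\bot$ term and that the marginally trapped condition kills $\tr(A_+)^2$ and $\inner{X'}{\vbH}$; you instead differentiate the null expansion along the generators and invoke the twist-free Raychaudhuri identity at $\theta_+=0$. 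This is essentially the Andersson--Metzger/MOTS-stability-operator viewpoint (the paper cites \cite{AM10} for it), and it buys a conceptually cleaner explanation of why the second variation operator has order zero (cf.\ Remark~\ref{rem_label}), at the cost of having to set up the affinely parametrized null congruence and its shear carefully. Your justification of why derivative terms in $\phi$ drop out (the mixed terms are proportional to $\inner{\Ell_+}{e_j}=0$ and the kinetic term to $\inner{\Ell_+}{\Ell_+}=0$) is the right mechanism and matches the paper's computation of $\inner{\Nabla_{e_i}X}{\Ell_+}=0$. The one place where you are thinner than the paper is the reduction of an \emph{arbitrary} variation in $\mathcal{N}_f^{n+1}$ to your normal-form variation $F_t(x)=\exp_{f(x)}(t\phi(x)\Ell_+(x))$: saying the tangential part ``contributes a divergence that integrates to zero'' disposes of it only at first order, and a general variation also carries an acceleration $X'$ that enters the second variation through $\inner{X'}{\vbH}$. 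The paper handles this in Proposition~\ref{prop_exchange_variation} by writing $G(t,x)=(\tau(t,x),\alpha(t,x))$, showing $\alpha(t,\cdot)$ is a diffeomorphism for small $t$, and composing with its inverse to produce a characteristic variation (one with $X'$ parallel to $\Ell_+$) having the \emph{same} volume function; you should make this reparametrization step explicit rather than implicit, since it is what legitimizes replacing $G$ by your geodesic normal form before differentiating twice.
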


The paper is organized as follows:
In Section~\ref{sec_1}, we explain basic facts of marginally trapped submanifolds and null energy condition.
In Section~\ref{sec_2}, we give the second variational formula on the volume of a marginally trapped submanifold.
In Section~\ref{sec_3}, we give the definition of the null-space for a marginally trapped submanifold and prove the main theorem.

\section{preliminaries}\label{sec_1}

In this paper, we assume that all mappings and manifolds are $C^\infty$-differentiable.
In Subsection~\ref{subsec_1}, we introduce basic notations on marginally trapped submanifolds.
In Subsection~\ref{subsec_2}, we explain properties of Lorentzian manifolds satisfying the null energy condition.

\subsection{Marginally trapped submanifold}\label{subsec_1}

We fix an $(n+2)$-dimensional Lorentzian manifold $M^{n+2}_1$ $(n\ge 2)$, and we denote by $\inner{\,}{\,}$ the Lorentzian metric on $M^{n+2}_1$.
Let $\Sigma^n$ be an $n$-dimensional manifold and $f:\Sigma^n\to M^{n+2}_1$ a space-like immersion.
We denote by $T\Sigma^n$ and $N\Sigma^n$ the tangent bundle and the normal bundle of $\Sigma^n$ in $M^{n+2}_1$, respectively.
In addition, we denote by $\mathfrak{X}(\Sigma^n)$ and $\Gamma(N\Sigma^n)$ the sets of global sections of $T\Sigma^n$ and $N\Sigma^n$, respectively.
We define the second fundamental form $\II:\mathfrak{X}(\Sigma^n)\times\mathfrak{X}(\Sigma^n)\to\Gamma(N\Sigma^n)$ with respect to $f$ as
\[\II(X,Y):=(\overline\nabla_XY)^\bot\qquad(X,Y\in\mathfrak{X}(\Sigma^n)),\]
where $\Nabla$ is the Levi-Civita connection of $M^{n+2}_1$, and $\bot$ represents the projection into the normal bundle $N\Sigma^n$.
We also define the \textit{mean curvature vector field} $\vbH\in\Gamma(N\Sigma^n)$ with respect to $f$ in $M^{n+2}_1$ as
\[\vbH:=\sum_{i=1}^n\II(e_i,e_i),\]
where $\{e_1,\dots,e_n\}$ is a local orthonormal frame field of $T\Sigma^n$.
The definition of marginally trapped submanifolds is given as follows:

\begin{definition}[cf. \cite{AM10}, \cite{HI15}]
  Let $f:\Sigma^n\to M^{n+2}_1$ be a space-like immersion of codimension 2.
  If there exist a global section $\Ell_+\in\Gamma(N\Sigma^n)$ such that $\Ell_+$ is non-zero at each point of $\Sigma^n$ and satisfies $\inner{\Ell_+}{\Ell_+}=0$ and $\inner{\Ell_+}{\vbH}=0$, we say that $f$ is a \textit{marginally {\rm (}outer{\rm )} trapped submanifold} (with respect to $\Ell_+$).
\end{definition}

Given a marginally trapped submanifold $f:\Sigma^n\to M^{n+2}_1$, we can take two global sections $\Ell_+,\;\Ell_-\in\Gamma(N\Sigma^n)$ satisfying
\begin{align}\label{eq_Ell}
  \inner{\Ell_+}{\Ell_+}=\inner{\Ell_-}{\Ell_-}=0,\qquad\inner{\Ell_+}{\Ell_-}=-2,\qquad\inner{\Ell_+}{\vbH}=0.
\end{align}
By defining
\[\theta^+:=\inner{\vbH}{\Ell_+},\qquad\theta^-:=\inner{\vbH}{\Ell_-},\]
we can express $\vbH=-(\theta^-\Ell_++\theta^+\Ell_-)/2$.
The definition of a marginally trapped submanifold implies that $\theta^+$ is identically zero, that is, $\vbH=-\;\theta^-\Ell_+/2$ holds.

\begin{example}\label{exam_1}
  Suppose that $M^{n+2}_1$ is the $(n+2)$-dimensional Minkowski spacetime $\R^{n+2}_1$ with signature $(-+\dots+)$, and we denote by $\Lambda^{n+1}$ the $(n+1)$-dimensional light-cone, that is, we define
  \[\Lambda^{n+1}:=\left\{\vb*{x}\in\R^{n+2}_1\setm\langle\vb*{x},\vb*{x}\rangle=0\right\}.\]
  Let $\vbp:\Sigma^n\to\Lambda^{n+1}$ be a space-like immersion satisfying that the scalar curvature with respect to the induced metric by $\vbp$ is identically zero.
  We define $\vbq:\Sigma^n\to\Lambda^{n+1}$ as the dual map of $\vbp$, that is, $\vbq$ is the unique map satisfying
  \[\inner{\vbq}{\vbq}=0\qquad\inner{\vbp}{\vbq}=1,\qquad\inner{X}{\vbq}=0\qquad(X\in\mathfrak{X}(\Sigma^n)).\]
  Then, the mean curvature vector field $\vbH$ of $\vbp$ can be written as $\vbH=-n\vbq$ (cf. \cite[Propositions~1.2 and 1.3]{Kishida24}).
  This implies that $\vbp$ is a marginally trapped submanifold in $\R^{n+2}_1$ by setting $\Ell_+:=\vbq$ and $\Ell_-:=-2\vbp$.
\end{example}

\begin{example}\label{exam_2}
  In the case of $M^{n+2}_1:=\R^{n+2}_1$, other typical examples of marginally trapped submanifolds are as follows:
  \begin{enumerate}[(a)]
    \item Let $f:\Sigma^n\to\R^{n+1}$ be a minimal hypersurface in the Euclidean space and $\nu$ its unit normal vector field, then the map $\Sigma^n\ni x\mapsto (0,f(x))\in\R^{n+2}_1$ is a marginally trapped submanifold by setting $\Ell_+:=(1,\nu)$ and $\Ell_-:=(1,-\nu)$ (we can also set $\Ell_+:=(1,-\nu),\;\Ell_-:=(1,\nu)$).
    \item Let $f:\Sigma^n\to\R^{n+1}_1$ be a space-like zero mean curvature hypersurface and $\nu$ its unit normal vector field, then the map $\Sigma^n\ni x\mapsto (f(x),0)\in\R^{n+2}_1$ is a marginally trapped submanifold by setting $\Ell_+:=(\nu,1)$ and $\Ell_-:=(\nu,-1)$  (we can also set $\Ell_+:=(\nu,-1),\;\Ell_-:=(\nu,1)$).
    \item We denote by $\mathbb{H}^{n+1}$ the $(n+1)$-dimensional hyperbolic space whose sectional curvature is equal to $-1$, that is, we define
    \[\mathbb{H}^{n+1}:=\{\vb*{x}=(x^1,\dots,x^{n+2})\in\R^{n+2}_1\setm\inner{\vb*{x}}{\vb*{x}}=-1,\;x^1>0\}.\]
    Let $f:\Sigma^n\to\mathbb{H}^{n+1}$ be a space-like hypersurface and $\nu$ its unit normal vector field, and we assume that the mean curvature vector field of $f$ coincides with $n\nu$ (i.e. $f$ is a space-like hypersurface in $\mathbb{H}^{n+1}$ whose mean curvature function is equal to 1).
    Then, the map $\Sigma^n\ni x\mapsto f(x)\in\R^{n+2}_1$ is a marginally trapped submanifold by setting $\Ell_+:=\nu+f$ and $\Ell_-:=-\nu+f$.
    In this case, the mean curvature vector field $\vbH$ can be written as $\vbH=n\Ell_+$.
    \item We denote by $\mathbb{S}_1^{n+1}$ the $(n+1)$-dimensional de Sitter spacetime whose sectional curvature is equal to 1, that is, we define
    \[\mathbb{S}^{n+1}_1:=\{\vb*{x}\in\R^{n+2}_1\setm\inner{\vb*{x}}{\vb*{x}}=1\}.\]
    Let $f:\Sigma^n\to\mathbb{S}_1^{n+1}$ be a space-like hypersurface and $\nu$ its unit normal vector field, and we assume that the mean curvature vector field of $f$ coincides with $-n\nu$ (i.e. $f$ is a space-like hypersurface in $\mathbb{S}_1^{n+1}$ whose mean curvature function is equal to 1).
    Then, the map $\Sigma^n\ni x\mapsto f(x)\in\R^{n+2}_1$ is a marginally trapped submanifold by setting $\Ell_+:=-\nu-f$ and $\Ell_-:=-\nu+f$.
    In this case, the mean curvature vector field $\vbH$ can be written as $\vbH=n\Ell_+$.
  \end{enumerate}
\end{example}

For a marginally trapped submanifold $f:\Sigma^n\to M^{n+2}_1$, we define two shape operators $A_\pm:\mathfrak{X}(\Sigma^n)\to\mathfrak{X}(\Sigma^n)$ so that
\[\inner{A_\pm X}{Y}:=\inner{\II(X,Y)}{\Ell_\pm}=-\inner{\Nabla_X\Ell_\pm}{Y}\qquad(X,Y\in\mathfrak{X}(\Sigma^n)).\]
Using the traces of the shape operators $A_\pm$, two real valued functions $\theta^\pm$ can be expressed as follows:

\begin{proposition}\label{prop_shape_op}
  We have $\theta^\pm=\tr(A_\pm)$, where $\tr(A_\pm)$ are the traces of $A_\pm$.
\end{proposition}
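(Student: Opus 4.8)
The plan is to compute $\tr(A_\pm)$ directly in a local orthonormal frame field and to recognise, after exchanging the sum with the inner product, the defining expression of the mean curvature vector field. The essential point I would use first is that, because $f$ is a space-like immersion, the metric induced on $\Sigma^n$ is positive definite; hence for any local orthonormal frame $\{e_1,\dots,e_n\}$ of $T\Sigma^n$ (so that $\inner{e_i}{e_j}=\delta_{ij}$) the trace of an endomorphism $A$ of $T\Sigma^n$ is given by $\tr(A)=\sum_{i=1}^n\inner{Ae_i}{e_i}$. In particular this applies to the shape operators $A_\pm$, which are moreover self-adjoint because $\II$ is symmetric.

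Next I would substitute the defining relation of $A_\pm$, namely $\inner{A_\pm X}{Y}=\inner{\II(X,Y)}{\Ell_\pm}$, into each diagonal term. Taking $X=Y=e_i$ gives $\inner{A_\pm e_i}{e_i}=\inner{\II(e_i,e_i)}{\Ell_\pm}$, so that
\[
  \tr(A_\pm)=\sum_{i=1}^n\inner{\II(e_i,e_i)}{\Ell_\pm}=\inner{\sum_{i=1}^n\II(e_i,e_i)}{\Ell_\pm},
\]
where I have used the bilinearity of $\inner{\cdot}{\cdot}$ to pull the (finite) sum inside the first slot. By the definition $\vbH=\sum_{i=1}^n\II(e_i,e_i)$ the right-hand side equals $\inner{\vbH}{\Ell_\pm}=\theta^\pm$, which is exactly the claim.

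There is essentially no analytic obstacle here; the only things that must be checked are bookkeeping ones, namely that the induced metric is Riemannian (so that the frame-based formula for the trace is valid and independent of the chosen orthonormal frame) and that the identity holds pointwise even though $\{e_i\}$ and $\Ell_\pm$ are only locally defined. Since both $\tr(A_\pm)$ and $\theta^\pm$ are globally well-defined functions on $\Sigma^n$ and the computation is purely pointwise, the local choice of frame causes no difficulty, and the proposition follows.
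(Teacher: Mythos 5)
Your proof is correct and follows exactly the same route as the paper's: compute $\tr(A_\pm)=\sum_i\inner{A_\pm e_i}{e_i}$ in a local orthonormal frame, substitute $\inner{A_\pm e_i}{e_i}=\inner{\II(e_i,e_i)}{\Ell_\pm}$, and sum to recognise $\inner{\vbH}{\Ell_\pm}=\theta^\pm$. The additional remarks on frame-independence and pointwise well-definedness are harmless but not needed beyond what the paper records.
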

\begin{proof}
  Let $\{e_1,\dots,e_n\}$ be a local orthonormal frame field of $T\Sigma^n$.
  Then, we have
  \[\tr(A_\pm)=\sum_{i=1}^n\inner{A_\pm e_i}{e_i}=\sum_{i=1}^n\inner{\II(e_i,e_i)}{\Ell_\pm}=\inner{\vbH}{\Ell_\pm}=\theta^\pm.\]
\end{proof}

Therefore, for a marginally trapped submanifold, the trace of $A_+$ is zero.

\subsection{Null energy condition}\label{subsec_2}

We denote by $R$ the curvature tensor of $M^{n+2}_1$, which is defined as
\begin{align*}
R(X,Y)Z&:=\Nabla_X\Nabla_YZ-\Nabla_Y\Nabla_XZ-\Nabla_{[X,Y]}Z,\\
R(X,Y,Z,W)&:=\inner{R(X,Y)Z}{W}\qquad(X,Y,Z,W\in\mathfrak{X}(M^{n+2}_1)),
\end{align*}
where $[\cdot,\cdot]$ is the Lie bracket of vector fields.
In addition, we denote by $\Ric$ the \textit{Ricci tensor} of $M^{n+2}_1$, which is defined as
\[\Ric(X,Y):=-R(X,e_1,e_1,Y)+\sum_{i=2}^{n+2}R(X,e_i,e_i,Y)\qquad(X,Y\in\mathfrak{X}(M^{n+2}_1)),\]
where $\{e_1,\dots,e_{n+2}\}$ is a local orthonormal frame field of $TM^{n+2}_1$ satisfying
\[\inner{e_1}{e_1}=-1,\qquad\inner{e_i}{e_i}=1\qquad(i=2,\dots,n+2).\]

\begin{definition}
  Let $M^{n+2}_1$ be an $(n+2)$-dimensional Lorentzian manifold.
  We call that $M^{n+2}_1$ satisfies the \textit{null energy condition} (called \textit{NEC}) if $M^{n+2}_1$ is light-like geodesically complete and $\Ric(X,X)\ge 0$ holds for any $X\in\mathfrak{X}(M^{n+2}_1)$ pointing in light-like directions.
\end{definition}

For example, light-like geodesically complete Einstein manifolds are typical examples of Lorentzian manifolds satisfying the null energy condition, and in particular $\R^{n+2}_1$ satisfies the null energy condition.
As a remarkable property of a light-like hypersurface in a Lorentzian manifold satisfying the null energy condition, the following fact is known.

\begin{fact}[Akamine-Honda-Umehara-Yamada~\cite{AHUY20}]\label{fact_ahuy}
  Let $M^{n+2}_1$ be an $(n+2)$-dimensional Lorentzian manifold satisfying the null energy condition, and let $N^{n+1}$ be an $(n+1)$-dimensional manifold.
  Suppose that $\Phi:N^{n+1}\to M^{n+2}_1$ is a proper map and a light-like immersion {\rm (}i.e. the induced metric by $\Phi$ is positive semi-definite but not positive definite at each point of $N^{n+1}${\rm )}.
  Then, $\Phi$ is totally geodesic.
\end{fact}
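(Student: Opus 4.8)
The plan is to study the degenerate geometry along the null generators of $\Phi$ and to combine the Raychaudhuri equation with the NEC. Since the induced metric is positive semi-definite with a one-dimensional kernel at each point, there is, at least locally and up to scale, a nowhere-vanishing $\xi\in\mathfrak{X}(N^{n+1})$ spanning this radical, and $d\Phi(\xi)$ is a null vector that is simultaneously tangent and $\inner{\cdot}{\cdot}$-normal to the image. First I would record the standard fact that the integral curves of $\xi$ are null pregeodesics of $M^{n+2}_1$, so after reparametrisation they become affinely parametrised null geodesics $\gamma(\lambda)$, and I would fix $\xi$ to be the associated affine generator. I would then introduce the null shape operator $B(X):=\left(\Nabla_X\xi\right)^{\top}$, which is symmetric, has $\xi$ in its kernel, and descends to the $n$-dimensional screen distribution $T N^{n+1}/\langle\xi\rangle$; its trace is the null expansion $\theta$ and its trace-free part is the shear $\sigma$. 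The point is that \emph{totally geodesic} for $\Phi$ is exactly the vanishing of $B$, i.e. $\theta\equiv0$ together with $\sigma\equiv0$.

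The analytic core is the Raychaudhuri equation along each generator,
\[
\frac{d\theta}{d\lambda}=-\frac{\theta^2}{n}-|\sigma|^2-\Ric(\xi,\xi),
\]
where the twist term is absent because the generator distribution is hypersurface-orthogonal. Invoking $\Ric(\xi,\xi)\ge0$ from the NEC and discarding the non-negative shear term gives the differential inequality $\theta'\le-\theta^2/n$. I would then argue by contradiction: if $\theta$ is negative at one point $\gamma(0)$ of some generator, the classical Riccati comparison forces $\theta\to-\infty$ at a finite affine time $\lambda_*\le n/|\theta(\gamma(0))|$, i.e. a focal point of the congruence.

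The step I expect to be the main obstacle is converting this analytic blow-up into a genuine contradiction, and this is precisely where \emph{both} hypotheses enter. Light-like geodesic completeness guarantees that $\Phi\circ\gamma$, being a null geodesic of $M^{n+2}_1$, is defined on all of $\R$, so $(\Phi\circ\gamma)([0,\lambda_*])$ is a compact subset of $M^{n+2}_1$; properness of $\Phi$ then makes its preimage compact in $N^{n+1}$, so the generator $\gamma$ cannot escape to infinity in $N^{n+1}$ and extends as an integral curve of $\xi$ to the closed interval $[0,\lambda_*]$. But $B$ is a smooth tensor field on $N^{n+1}$, hence bounded on the compact image $\gamma([0,\lambda_*])$, so $\theta=\tr B$ stays bounded there, contradicting $\theta\to-\infty$. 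Thus $\theta\ge0$ on every generator; replacing $\xi$ by $-\xi$ reverses the sign of $\theta$ while preserving the NEC, and the same argument yields $\theta\le0$, whence $\theta\equiv0$. Finally, substituting $\theta\equiv0$ (so $\theta'\equiv0$) back into the Raychaudhuri equation leaves $0=-|\sigma|^2-\Ric(\xi,\xi)$ with both summands non-negative, forcing $\sigma\equiv0$ (and incidentally $\Ric(\xi,\xi)\equiv0$). Therefore $B\equiv0$ and $\Phi$ is totally geodesic.
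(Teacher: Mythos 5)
This statement is quoted in the paper as a \emph{Fact} imported from Akamine--Honda--Umehara--Yamada \cite{AHUY20}; the paper itself gives no proof, so there is nothing internal to compare against. Your argument is, in substance, the proof from \cite{AHUY20}: the trace of the matrix Riccati equation for the null Weingarten map gives the Raychaudhuri inequality $\theta'\le-\theta^2/n$ under the NEC, null completeness plus properness lets you lift the full ambient generator back into $N^{n+1}$ so that $\theta$ stays finite on every compact affine interval, the Riccati comparison then forces $\theta\equiv 0$ in both directions, and feeding this back kills the shear and hence the whole second fundamental form. The one place that deserves a sentence more care is the normalization: the affinely parametrized $\xi$ is only determined generator-by-generator, so you should note that rescaling $\xi_0\mapsto h\xi_0$ multiplies $\theta$ by the positive function $h$ (the $(Xh)\xi_0$ term dies in the screen quotient), so boundedness of $\theta$ on the compact lifted segment is independent of the choice of smooth local section of the radical; with that remark the proof is complete and correct.
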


\section{The variational formulas of marginally trapped submanifold}\label{sec_2}

In this section, we calculate variational formulas on the volume of marginally trapped submanifold with respect to a light-like direction.

We assume that $\Sigma^n$ is a compact oriented manifold with boundary.
Let $F:(-\epsilon,\epsilon)\times\Sigma^n\to M^{n+2}_1$ be a variation of a marginally trapped submanifold $f:\Sigma^n\to M^{n+2}_1$ with fixed boundary.
For the variation $F$, we define the vector field $X$ along $f$ as
\[X:=\left.\frac{\partial F}{\partial t}\right|_{t=0}\]
called the \textit{variational vector field} with respect to $F$. 
Moreover, we define the vector field $X'$ along $f$ as
\[X':=\left.\left(\Nabla_{\partial/\partial t}\frac{\partial F}{\partial t}\right)\right|_{t=0}.\]
In \cite{Kishida24}, we introduced ``admissible" and ``characteristic" variations for space-like hypersurfaces in the light-cone as well behaved variations with respect to a light-like direction, which can be extended to variations of a marginally trapped submanifolds as follows:

\begin{definition}[A generalization of {\cite[Definitions~2.1 and 2.6]{Kishida24}}]
  Let $f:\Sigma^n\to M^{n+2}_1$ be a marginally trapped submanifold with respect to $\Ell_+$, and let $F:(-\epsilon,\epsilon)\times\Sigma^n\to M^{n+2}_1$ be a variation of $f$ with fixed boundary.
  Then, $F$ is called an \textit{admissible variation} of $f$ if there exists a real-valued function $\phi$ defined on $\Sigma^n$ which satisfies $X=\phi\Ell_+$.
  Moreover, $F$ is called a \textit{characteristic variation} of $f$ if $F$ is admissible and there exists a real-valued function $\psi$ defined on $\Sigma^n$ which satisfies $X'=\psi\Ell_+$.
\end{definition}

We denote by $dV_t$ the Riemannian volume form of the induced metric determined by $F(t,\cdot)$, and we set $\Vol(t):=\int_{\Sigma^n}dV_t$.
It is well-known that
\[\left.\frac{d}{dt}\Vol(t)\right|_{t=0}=-\int_{\Sigma^n}\inner{X}{\vbH}dV_0\]
holds, and the following proposition follows immediately.

\begin{proposition}\label{prop_first_variation}
  Let $\Sigma^n$ be a compact oriented manifold with boundary, and let $f:\Sigma^n\to M^{n+2}_1$ be a marginally trapped submanifold.
  Then, for any admissible variation with fixed boundary, the first variation on the volume is zero.
\end{proposition}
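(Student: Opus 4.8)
The plan is to reduce the statement directly to the well-known first variation formula quoted just above,
\[
  \left.\frac{d}{dt}\Vol(t)\right|_{t=0}=-\int_{\Sigma^n}\inner{X}{\vbH}\,dV_0,
\]
and to observe that the integrand vanishes identically once the two defining hypotheses are combined. First I would recall that, by definition of an admissible variation with respect to $\Ell_+$, the variational vector field has the form $X=\phi\Ell_+$ for some real-valued function $\phi$ on $\Sigma^n$. Substituting this into the formula and using the linearity of $\inner{\cdot}{\cdot}$ in the first slot gives
\[
  \left.\frac{d}{dt}\Vol(t)\right|_{t=0}=-\int_{\Sigma^n}\phi\,\inner{\Ell_+}{\vbH}\,dV_0.
\]

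The key step is then to invoke the marginally trapped condition. Since $f$ is marginally trapped with respect to $\Ell_+$, we have $\inner{\Ell_+}{\vbH}=0$ at every point of $\Sigma^n$; equivalently, $\theta^+=\inner{\vbH}{\Ell_+}=\tr(A_+)$ vanishes identically by Proposition~\ref{prop_shape_op}. Hence the integrand $\phi\,\inner{\Ell_+}{\vbH}$ is identically zero on $\Sigma^n$, and the integral vanishes regardless of the choice of $\phi$. As every admissible variation with fixed boundary produces such an $X$, this yields the first variation being zero.

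There is essentially no technical obstacle here: the content of the proposition is precisely the pointwise orthogonality $\inner{\Ell_+}{\vbH}=0$ built into the definition of a marginally trapped submanifold, and the admissibility hypothesis is exactly what guarantees that the variational vector field is a multiple of $\Ell_+$, so that this orthogonality can be applied. The only point deserving attention is that the compactness of $\Sigma^n$ together with the fixed-boundary condition are what justify the first variation formula itself without boundary contributions, but this is already subsumed in the quoted formula. Thus the proof is a one-line computation, the nontrivial geometric input having been pushed entirely into the definitions and Proposition~\ref{prop_shape_op}.
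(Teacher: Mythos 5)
Your proposal is correct and coincides with the paper's own (implicit) argument: the paper states the first variation formula and remarks that the proposition "follows immediately," which is exactly your substitution of $X=\phi\Ell_+$ combined with the defining identity $\inner{\Ell_+}{\vbH}=0$.
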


It is known that the second variation on the volume in the general case can be expressed as described in the following proposition (see \cite[Section 6]{SW01} and \cite[Appendix A]{Kishida24} for the proof of the proposition).
\begin{proposition}\label{prop_second_variation_general}
  Let $(M^{n+2}_1,\inner{\cdot}{\cdot})$ be an $(n+2)$-dimensional Lorentzian manifold and $\Sigma^n$ a compact oriented manifold with boundary.
  For a variation $F$ of a space-like immersion $f:\Sigma^n\to M^{n+2}_1$ with fixed boundary, the second variation on the volume is given by
  \begin{equation}\begin{split}\label{eq_second_variation_general}
    \left.\frac{d^2}{dt^2}\Vol(t)\right|_{t=0}
    =&\int_{\Sigma^n}\Biggl(
    -\sum_{i=1}^nR(X,e_i,e_i,X)
    +\sum_{i=1}^n\inner{(\Nabla_{e_i}X)^\bot}{(\Nabla_{e_i}X)^\bot}\\
    &\qquad\qquad
    -\sum_{i,j=1}^n\inner{\Nabla_{e_i}X}{e_j}\inner{\Nabla_{e_j}X}{e_i}\\
    &\qquad\qquad
    +\sum_{i,j=1}^n\inner{\Nabla_{e_i}X}{e_i}\inner{\Nabla_{e_j}X}{e_j}
    -\inner{X'}{\vbH}
  \Biggr)dV_0,
  \end{split}\end{equation}
  where $\{e_1,\dots,e_n\}$ is a local orthonormal frame field of $T\Sigma^n$.
\end{proposition}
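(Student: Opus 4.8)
The plan is to differentiate the induced volume element twice, reducing the whole computation to the metric coefficients. Fix a point $p\in\Sigma^n$ and choose local coordinates $(x^1,\dots,x^n)$ near $p$ whose coordinate fields agree, at $p$ and at $t=0$, with a prescribed local orthonormal frame $\{e_1,\dots,e_n\}$ of $T\Sigma^n$. Writing $F_i:=\partial F/\partial x^i$ and $F_t:=\partial F/\partial t$, the induced metric coefficients are $g_{ij}(t)=\inner{F_i}{F_j}$ and $dV_t=\sqrt{\det(g_{ij}(t))}\,dx^1\cdots dx^n$. Setting $\rho(t):=\sqrt{\det(g_{ij}(t))}$, the standard matrix identities
\begin{equation*}
  \frac{d}{dt}\log\rho=\tfrac12\tr(g^{-1}\dot g),\qquad
  \frac{d^2}{dt^2}\rho=\rho\left(\tfrac12\tr(g^{-1}\ddot g)-\tfrac12\tr\bigl((g^{-1}\dot g)^2\bigr)+\tfrac14\bigl(\tr(g^{-1}\dot g)\bigr)^2\right)
\end{equation*}
reduce everything to $\dot g_{ij}$ and $\ddot g_{ij}$ evaluated at $t=0$, where $g_{ij}(0)=\delta_{ij}$ and $\rho(0)=1$.

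First I would compute these derivatives using that $\Nabla$ is torsion-free and $[\partial_t,\partial_i]=0$, which gives the exchange rule $\Nabla_{\partial_t}F_i=\Nabla_{\partial_i}F_t$. One differentiation of $g_{ij}=\inner{F_i}{F_j}$ yields, at $t=0$, $\dot g_{ij}(0)=\inner{\Nabla_{e_i}X}{e_j}+\inner{\Nabla_{e_j}X}{e_i}$. Differentiating again and invoking the commutation identity $\Nabla_{\partial_t}\Nabla_{\partial_i}F_t=\Nabla_{\partial_i}\Nabla_{\partial_t}F_t+R(F_t,F_i)F_t$ together with $\Nabla_{\partial_t}F_t|_{t=0}=X'$ gives
\begin{equation*}
  \ddot g_{ij}(0)=\inner{\Nabla_{e_i}X'}{e_j}+\inner{\Nabla_{e_j}X'}{e_i}+2\inner{\Nabla_{e_i}X}{\Nabla_{e_j}X}+2R(X,e_i,X,e_j),
\end{equation*}
where the two curvature contributions have been merged by the pair symmetry $R(X,e_i,X,e_j)=R(X,e_j,X,e_i)$.

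Next I would substitute the resulting traces $\tr(g^{-1}\dot g)(0)=2\sum_i\inner{\Nabla_{e_i}X}{e_i}$, $\tr\bigl((g^{-1}\dot g)^2\bigr)(0)=\sum_{i,j}(\dot g_{ij}(0))^2$, and $\tr(g^{-1}\ddot g)(0)=\sum_i\ddot g_{ii}(0)$ into the formula for $\rho''(0)$ and collect terms. The diagonal curvature part becomes $\sum_i R(X,e_i,X,e_i)=-\sum_i R(X,e_i,e_i,X)$ by antisymmetry in the last two slots, and the orthogonal splitting $\Nabla_{e_i}X=(\Nabla_{e_i}X)^\top+(\Nabla_{e_i}X)^\bot$ turns $\sum_i\inner{\Nabla_{e_i}X}{\Nabla_{e_i}X}-\sum_{i,j}\inner{\Nabla_{e_i}X}{e_j}^2$ into $\sum_i\inner{(\Nabla_{e_i}X)^\bot}{(\Nabla_{e_i}X)^\bot}$, while the two remaining quadratic terms $-\sum_{i,j}\inner{\Nabla_{e_i}X}{e_j}\inner{\Nabla_{e_j}X}{e_i}$ and $+\sum_{i,j}\inner{\Nabla_{e_i}X}{e_i}\inner{\Nabla_{e_j}X}{e_j}$ come directly from the $\tr\bigl((g^{-1}\dot g)^2\bigr)$ and $\bigl(\tr(g^{-1}\dot g)\bigr)^2$ pieces. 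This reproduces every term of \eqref{eq_second_variation_general} except the contribution $\sum_i\inner{\Nabla_{e_i}X'}{e_i}$.

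That last term I would handle by integration by parts. Splitting $X'=(X')^\top+(X')^\bot$, the normal part contributes $\sum_i\inner{\Nabla_{e_i}(X')^\bot}{e_i}=-\sum_i\inner{(X')^\bot}{\II(e_i,e_i)}=-\inner{X'}{\vbH}$, while the tangential part is the intrinsic divergence $\operatorname{div}_{\Sigma^n}\bigl((X')^\top\bigr)$, whose integral vanishes by the divergence theorem because the fixed-boundary condition forces $X'\equiv0$ on $\partial\Sigma^n$. Collecting all contributions yields \eqref{eq_second_variation_general}. I expect the only delicate point to be the second-derivative curvature step: keeping the torsion-free exchange rule, the commutation identity, and the index symmetries of $R$ mutually consistent, and in particular checking that the sign produced by $R(X,e_i,X,e_i)=-R(X,e_i,e_i,X)$ matches the stated formula. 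The tangential/normal splittings and the boundary integration are routine by comparison.
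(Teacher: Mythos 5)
Your computation is correct: the matrix identities for $\rho''$, the exchange rule $\Nabla_{\partial_t}F_i=\Nabla_{\partial_i}F_t$, the commutation with the curvature term (whose sign is consistent with the paper's convention $R(X,Y,Z,W)=\inner{R(X,Y)Z}{W}$ and yields $+\sum_iR(X,e_i,X,e_i)=-\sum_iR(X,e_i,e_i,X)$), and the final integration by parts converting $\sum_i\inner{\Nabla_{e_i}X'}{e_i}$ into $-\inner{X'}{\vbH}$ plus a vanishing boundary term all check out, and the argument works verbatim in the Lorentzian setting since the induced metric on $\Sigma^n$ is Riemannian. The paper gives no proof of its own for this proposition but defers to \cite[Section 6]{SW01} and \cite[Appendix A]{Kishida24}, which carry out essentially this same standard second-variation computation.
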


Using this formula, we can show the following.

\begin{proposition}[A generalization of {\cite[Proposition~2.8]{Kishida24}}]\label{prop_volume_var_nec}
  Let $f:\Sigma^n\to M^{n+2}_1$ be a marginally trapped submanifold with respect to $\Ell_+$, and let $F:(-\epsilon,\epsilon)\times\Sigma^n\to M^{n+2}_1$ be a characteristic variation of $f$ with fixed boundary.
  Suppose that the variational vector field $X$ of $F$ is written as $X=\phi\Ell_+$ by a real-valued function $\phi$ on $\Sigma^n$.
  Then, the second variation on the volume is given by
  \begin{align}\label{eq_volume_var_nec}
    \left.\frac{d^2}{dt^2}\Vol(t)\right|_{t=0}=-\int_{\Sigma^n}\phi^2\Bigl(\tr(A_+^2)+\Ric(\Ell_+,\Ell_+)\Bigr)dV_0,
  \end{align}
  where $A_+^2$ is the composition of two $A_+$.
\end{proposition}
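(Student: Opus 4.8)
The plan is to start from the general second variation formula \eqref{eq_second_variation_general} and substitute the characteristic-variation data $X=\phi\Ell_+$ and $X'=\psi\Ell_+$, showing that three of the five integrand terms drop out and the remaining two assemble into the right-hand side of \eqref{eq_volume_var_nec}. First I would dispose of the last term: since $F$ is characteristic, $X'=\psi\Ell_+$, and the marginally trapped condition gives $\inner{X'}{\vbH}=\psi\inner{\Ell_+}{\vbH}=0$. This is exactly the role of the ``characteristic'' hypothesis. Next I would decompose the covariant derivative of the variational field. Writing $\Nabla_{e_i}\Ell_+=-A_+e_i+\sigma_i\Ell_+$, where the tangential part is $-A_+e_i$ by the defining relation $\inner{A_+X}{Y}=-\inner{\Nabla_X\Ell_+}{Y}$, and where the normal part is a multiple of $\Ell_+$ alone (because $\inner{\Nabla_{e_i}\Ell_+}{\Ell_+}=\tfrac12 e_i\inner{\Ell_+}{\Ell_+}=0$ forces the $\Ell_-$-component to vanish, using \eqref{eq_Ell}), I obtain
\[\Nabla_{e_i}X=-\phi A_+e_i+(e_i\phi+\phi\sigma_i)\Ell_+.\]

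From this decomposition the three remaining simplifications are immediate. The term $\sum_{i}\inner{(\Nabla_{e_i}X)^\bot}{(\Nabla_{e_i}X)^\bot}$ vanishes because $(\Nabla_{e_i}X)^\bot$ is a multiple of the \emph{null} vector $\Ell_+$, so its self-inner-product is zero. The term $\sum_{i,j}\inner{\Nabla_{e_i}X}{e_i}\inner{\Nabla_{e_j}X}{e_j}$ vanishes because each factor equals $-\phi\tr(A_+)=-\phi\theta^+=0$ by Proposition~\ref{prop_shape_op} together with the marginally trapped condition. Finally, since $A_+$ is self-adjoint with respect to the induced metric (the second fundamental form being symmetric), the term $-\sum_{i,j}\inner{\Nabla_{e_i}X}{e_j}\inner{\Nabla_{e_j}X}{e_i}$ reduces to $-\phi^2\tr(A_+^2)$.

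The remaining, and most delicate, step is to identify the curvature term $-\sum_{i=1}^n R(X,e_i,e_i,X)=-\phi^2\sum_{i=1}^n R(\Ell_+,e_i,e_i,\Ell_+)$ with $-\phi^2\Ric(\Ell_+,\Ell_+)$. To this end I would complete the tangential orthonormal frame $\{e_1,\dots,e_n\}$ to a pseudo-orthonormal frame $\{u,e_1,\dots,e_n,v\}$ of $TM^{n+2}_1$ by adjoining the timelike vector $u=(\Ell_++\Ell_-)/2$ and the spacelike vector $v=(\Ell_+-\Ell_-)/2$; the normalizations \eqref{eq_Ell} yield $\inner{u}{u}=-1$, $\inner{v}{v}=1$, and $\inner{u}{v}=\inner{u}{e_i}=\inner{v}{e_i}=0$. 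Feeding this frame into the definition of $\Ric$ gives
\[\Ric(\Ell_+,\Ell_+)=\sum_{i=1}^n R(\Ell_+,e_i,e_i,\Ell_+)-R(\Ell_+,u,u,\Ell_+)+R(\Ell_+,v,v,\Ell_+),\]
so the identity follows once $R(\Ell_+,u,u,\Ell_+)=R(\Ell_+,v,v,\Ell_+)$. I expect this equality to be the main obstacle, but it resolves cleanly via the curvature symmetries: expanding $u$ and $v$ bilinearly and discarding the pieces $R(\Ell_+,\Ell_+,\cdot,\cdot)$ and $R(\cdot,\cdot,\Ell_+,\Ell_+)$, which vanish by the antisymmetry of $R$ in its first and last pairs, both quantities collapse to $\tfrac14 R(\Ell_+,\Ell_-,\Ell_-,\Ell_+)$ and are therefore equal. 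Collecting the surviving contributions $-\phi^2\Ric(\Ell_+,\Ell_+)$ and $-\phi^2\tr(A_+^2)$ then produces \eqref{eq_volume_var_nec}.
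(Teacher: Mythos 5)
Your proof is correct and follows essentially the same route as the paper: substitute $X=\phi\Ell_+$ into \eqref{eq_second_variation_general}, kill the normal, divergence, and $\inner{X'}{\vbH}$ terms using the nullity of $\Ell_+$, $\tr(A_+)=\theta^+=0$, and the characteristic condition, and identify the curvature sum with $\Ric(\Ell_+,\Ell_+)$ via the frame $\{(\Ell_++\Ell_-)/2,(\Ell_+-\Ell_-)/2,e_1,\dots,e_n\}$ and the symmetries of $R$. The only difference is that you write out explicitly the computations yielding $\phi^2\tr(A_+^2)$ and $\phi^2\tr(A_+)^2$, which the paper defers to \cite[Section~2]{Kishida24}.
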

\begin{proof}
  We calculate each term in the integrand function of \eqref{eq_second_variation_general}.
  First, we consider the term relating to the curvature tensor.
  Let $\{e_1,\dots,e_n\}$ be a local orthonormal frame field of $T\Sigma^n$.
  By taking $\Ell_\pm\in\Gamma(N\Sigma^n)$ satisfying \eqref{eq_Ell}, we have the local orthonormal frame field
  \[\left\{\frac{\Ell_++\Ell_-}{2},\;\frac{\Ell_+-\Ell_-}{2},\;e_1,\dots,e_n\right\}\]
  of $TM^{n+2}_1$.
  Then, by using the symmetry of the curvature tensor $R$, we can easily check that
  \[R\left(\Ell_+,\frac{\Ell_++\Ell_-}{2},\frac{\Ell_++\Ell_-}{2},\Ell_+\right)=R\left(\Ell_+,\frac{\Ell_+-\Ell_-}{2},\frac{\Ell_+-\Ell_-}{2},\Ell_+\right).\]
  Therefore, we obtain
  \[\sum_{i=1}^n\inner{R(X,e_i)e_i}{X}=\phi^2\Ric(\Ell_+,\Ell_+).\]
  
  Next, we consider the second through fourth terms in the integrand function of \eqref{eq_second_variation_general}.
  Since $\Nabla_{e_i}X$ can be written as
  \[\Nabla_{e_i}X=(e_i\phi)\Ell_++\phi\Nabla_{e_i}\Ell_+\]
  for each unit vector field $e_i$, we can calculate
  \[\inner{\Nabla_{e_i}X}{\Ell_+}=\phi\inner{\Nabla_{e_i}\Ell_+}{\Ell_+}=\frac{1}{2}\phi e_i\inner{\Ell_+}{\Ell_+}=0.\]
  This implies that $(\Nabla_{e_i}X)^\bot$ and $\Ell_+$ are linearly dependent, therefore, we obtain \[\sum_{i=1}^n\inner{(\Nabla_{e_i}X)^\bot}{(\Nabla_{e_i}X)^\bot}=0.\]
  Moreover, by same calculations in \cite[Section 2]{Kishida24}, we obtain
  \begin{align*}
    &\sum_{i,j=1}^n\inner{\Nabla_{e_i}X}{e_j}\inner{\Nabla_{e_j}X}{e_i}=\phi^2\tr(A_+^2),\\
    &\sum_{i,j=1}^n\inner{\Nabla_{e_i}X}{e_i}\inner{\Nabla_{e_j}X}{e_j}=\phi^2\tr(A_+)^2=0.
  \end{align*}
  
  Finally, we have $\inner{X'}{\vbH}=0$ because of the definition of a characteristic variation.
  From these calculations, the proposition follows.
\end{proof}

\begin{remark}\label{rem_label}
  In the case of $n=2$, the second variational formula given in Proposition~\ref{prop_volume_var_nec} confirms the results given in \cite{AM10, HI15}.
  As noted in Andersson-Metzger~\cite{AM10}, for minimal hypersurfaces in a Riemannian manifold, or maximal hypersurfaces in a Lorentzian manifold, the second variation operator for volume is an elliptic operator of second order.
  In contrast, Proposition~\ref{prop_volume_var_nec} shows that the second variation operator with respect to a characteristic variation of a marginally trapped submanifold is an operator of order zero.
  This fact seems to be a remarkable phenomenon connecting to the volume maximality of a marginally trapped submanifold.
\end{remark}

By assuming that $M^{n+2}_1$ satisfies the null energy condition, we obtain the volume maximality for characteristic variations as follows:

\begin{proposition}\label{prop_second_variation}
  Let $\Sigma^n$ be a compact oriented manifold with boundary, and let $f:\Sigma^n\to M^{n+2}_1$ be a marginally trapped submanifold.
  Suppose that $M^{n+2}_1$ satisfies the null energy condition.
  Then, for any characteristic variation with fixed boundary, the second variation on the volume is non-positive.
\end{proposition}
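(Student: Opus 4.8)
The plan is to start directly from the second variational formula for characteristic variations, which has already been established in Proposition~\ref{prop_volume_var_nec}. For a characteristic variation with fixed boundary, the variational vector field is by definition of the form $X=\phi\Ell_+$ for some real-valued function $\phi$ on $\Sigma^n$ (admissibility is part of the definition of a characteristic variation), so formula~\eqref{eq_volume_var_nec} applies directly and expresses the second variation as $-\int_{\Sigma^n}\phi^2\bigl(\tr(A_+^2)+\Ric(\Ell_+,\Ell_+)\bigr)\,dV_0$. Since $\phi^2\ge 0$ pointwise, it then suffices to show that the two quantities inside the parentheses are each non-negative at every point of $\Sigma^n$; the overall minus sign will force the second variation to be non-positive.

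For the first term I would argue by linear algebra. Because $f$ is a space-like immersion, the metric induced on $T\Sigma^n$ is positive definite, and the shape operator $A_+$ is self-adjoint with respect to it: from the defining relation $\inner{A_+X}{Y}=\inner{\II(X,Y)}{\Ell_+}$ and the symmetry of the second fundamental form $\II$ one reads off $\inner{A_+X}{Y}=\inner{A_+Y}{X}$. A self-adjoint endomorphism of a Euclidean space is diagonalizable with real eigenvalues $\lambda_1,\dots,\lambda_n$, whence $\tr(A_+^2)=\sum_{i=1}^n\lambda_i^2\ge 0$ at each point.

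The second term is where the hypothesis is used. The section $\Ell_+$ is nowhere-zero and light-like by the definition of a marginally trapped submanifold, so at each point it points in a light-like direction, and the null energy condition yields $\Ric(\Ell_+,\Ell_+)\ge 0$ immediately. I note in passing that only the Ricci-sign half of NEC enters here; light-like geodesic completeness plays no role in this proposition and is instead reserved for the construction of the null-space in Section~\ref{sec_3}, via Fact~\ref{fact_ahuy}.

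Combining the two estimates, the integrand $\phi^2\bigl(\tr(A_+^2)+\Ric(\Ell_+,\Ell_+)\bigr)$ is non-negative on all of $\Sigma^n$, so its integral is non-negative and the second variation is non-positive. I do not anticipate any real obstacle: the computational content has already been absorbed into Proposition~\ref{prop_volume_var_nec}, and the only step requiring a moment's care is the self-adjointness of $A_+$, which rests on the symmetry of $\II$ together with the positive definiteness of the induced metric.
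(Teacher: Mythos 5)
Your proposal is correct and follows essentially the same route as the paper: the paper's proof simply cites formula \eqref{eq_volume_var_nec} together with the null energy condition, and your argument fills in the (true but unstated) details that $\tr(A_+^2)\ge 0$ because $A_+$ is self-adjoint with respect to the positive definite induced metric, and that $\Ric(\Ell_+,\Ell_+)\ge 0$ because $\Ell_+$ is a nowhere-zero light-like section. Your observation that only the Ricci-sign part of NEC is used here, with light-like geodesic completeness reserved for the construction of the null-space, is also accurate.
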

\begin{proof}
  It is clear from \eqref{eq_volume_var_nec} and the definition of the null energy condition.
\end{proof}

\section{The null-space of a marginally trapped submanifold}\label{sec_3}

In this section, we give the definition of the ``null-space'' of a marginally trapped submanifold.
The definition here is a generalization of the null-space defined in \cite[Section 3]{Kishida24} for a space-like hypersurface in the light-cone.

\begin{definition}[A generalization of {\cite[Definition~3.1]{Kishida24}}]\label{def_null_space}
  Let $M^{n+2}_1$ be a light-like geodesically complete Lorentzian manifold and $f:\Sigma^n\to M^{n+2}_1$ a marginally trapped submanifold with respect to $\Ell_+$.
  Then, we define the map $\Phi_f:\R\times\Sigma^n\to M^{n+2}_1$ as
  \begin{equation}\label{eq_def_G}
    \Phi_f(t,x):=\exp_{f(x)}\bigl(t\Ell_+(x)\bigr)\qquad(t\in\R,\;x\in\Sigma^n),
  \end{equation}
  where $\exp$ is the exponential map of $M^{n+2}_1$.
  We set $\mathcal{N}_f^{n+1}:=\R\times\Sigma^n$, and consider that $\mathcal{N}_f^{n+1}$ is equipped with the induced metric determined by $\Phi_f$.
  This space $\mathcal{N}_f^{n+1}$ with the degenerate metric is called the \textit{null-space} with respect to $f$. 
\end{definition}

In the case of $M^{n+2}_1:=\R^{n+2}_1$, the null-space $\mathcal{N}_f^{n+1}$ can be considered as an ``L-complete null wave front'', which is defined in Akamine-Honda-Umehara-Yamada~\cite{AHUY25}.

\begin{proposition}[A generalization of {\cite[Proposition~3.5]{Kishida24}}]
  Let $f:\Sigma^n\to\R^{n+2}_1$ be a marginally trapped submanifold.
  Then, the map $\Phi_f$ defined as \eqref{eq_def_G} is an L-complete null wave front.
\end{proposition}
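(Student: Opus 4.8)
The plan is to specialize the construction to the flat ambient space $\R^{n+2}_1$, where the exponential map is explicit, and then to check in turn the three ingredients in the definition of an L-complete null wave front: that $\Phi_f$ is a null frontal, that its Legendrian lift is an immersion, and that its null rulings are complete.

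First I would note that in $\R^{n+2}_1$ the geodesic through $f(x)$ with initial velocity $\Ell_+(x)$ is the straight line $t\mapsto f(x)+t\Ell_+(x)$, so that \eqref{eq_def_G} becomes $\Phi_f(t,x)=f(x)+t\,\Ell_+(x)$, defined for every $t\in\R$. Differentiating gives $\partial_t\Phi_f=\Ell_+$ and $\partial_{x^i}\Phi_f=df(e_i)+t\,\Nabla_{e_i}\Ell_+$ for a local frame $\{e_1,\dots,e_n\}$ of $T\Sigma^n$, where $\Nabla$ is now the flat connection. Using that $\Ell_+$ is a nowhere-vanishing null normal field of $f$, one has $\inner{\Ell_+}{\Ell_+}=0$, $\inner{\Ell_+}{df(e_i)}=0$, and $\inner{\Ell_+}{\Nabla_{e_i}\Ell_+}=\tfrac12 e_i\inner{\Ell_+}{\Ell_+}=0$. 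Hence $\Ell_+=\partial_t\Phi_f$ is orthogonal to every tangent vector of $\Phi_f$ and lies in the kernel of the pulled-back metric; the induced metric is positive semi-definite and nowhere positive definite, so $\Phi_f$ is a null frontal with null generator $\Ell_+$.

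Next I would verify the front (wave front) condition, that is, that the Legendrian lift carrying the null conormal direction $\Ell_+$ is an immersion. Writing the lift as $(t,x)\mapsto\bigl(\Phi_f(t,x),\Ell_+(x)\bigr)$, a tangent vector $a\,\partial_t+\sum_i b^i\partial_{x^i}$ is annihilated exactly when both $\sum_i b^i\Nabla_{e_i}\Ell_+=0$ (fiber part) and $a\,\Ell_++\sum_i b^i\,df(e_i)+t\sum_i b^i\Nabla_{e_i}\Ell_+=0$ (base part) hold. The first relation makes the last term of the second relation vanish, leaving $a\,\Ell_++df\bigl(\sum_i b^i e_i\bigr)=0$; since $\Ell_+$ is a nonzero normal vector and $f$ is an immersion, this forces $a=0$ and all $b^i=0$. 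This is the crux of the argument and the only place where genuine content enters: precisely at the focal parameters where $d\Phi_f$ drops rank, the degeneracy is absorbed into the fiber component $\sum_i b^i\Nabla_{e_i}\Ell_+$, so the lift remains an immersion even over the singular set of $\Phi_f$, exactly as in the light-cone computation of \cite[Proposition~3.5]{Kishida24}. Thus $\Phi_f$ is a null wave front, and L-completeness is then immediate from the first step, since each null ruling $t\mapsto f(x)+t\,\Ell_+(x)$ is an affinely parametrized complete null geodesic of $\R^{n+2}_1$ defined on all of $\R$. Combining the null-frontal property, the immersivity of the Legendrian lift, and the completeness of the rulings yields that $\Phi_f$ is an L-complete null wave front. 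The main obstacle is the front condition of the preceding paragraph, namely controlling the Legendrian lift across the caustic of the null congruence, whereas the frontal property and the completeness reduce to the flat-space computations just indicated.
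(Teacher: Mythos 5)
Your proof is correct and follows essentially the same route as the paper, which simply defers to the proof of \cite[Proposition~3.5]{Kishida24} with $\vbp,\vbq$ replaced by $f,\Ell_+$; you have in effect written out the details that the paper leaves to that reference (explicit rulings $f(x)+t\Ell_+(x)$ in flat space, degeneracy of the induced metric in the $\Ell_+$-direction, immersivity of the Legendrian lift, and completeness of the null lines). The key step you identify --- that the kernel relation $a\,\Ell_++df(\sum_i b^ie_i)=0$ forces $a=0$ and $b^i=0$ because $\Ell_+$ is a nowhere-zero null normal while $df$ is injective into the space-like tangent space --- is exactly the mechanism in the cited computation.
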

\begin{proof}
  Imitating the proof of \cite[Proposition~3.5]{Kishida24}, we can show that $\Phi_f$ is an L-complete null wave front.
\end{proof}

\[
\begin{tikzcd}
  \Sigma^n \arrow[r,"f"]\arrow[d,"\iota_f"] & M^{n+2}_1\\
  \mathcal{N}_f^{n+1}\arrow[ru,"\Phi_f"']
\end{tikzcd}
\]
\begin{center}
  Figure 1. The diagram related to $\mathcal{N}_f^{n+1}$.
\end{center}

By our construction, a marginally trapped submanifold $f$ is an embedded hypersurface in $\mathcal{N}_f^{n+1}$ by the isometric embedding $\iota_f:\Sigma^n\ni x\mapsto(0,x)\in\mathcal{N}_f^{n+1}$ (cf. Figure 1).
Therefore, a variation in $\mathcal{N}_f^{n+1}$ can be interpreted as a variation of $\iota_f$.
If the ambient space $M^{n+2}_1$ satisfies the null energy condition, Fact~\ref{fact_ahuy} implies that the null-space $\mathcal{N}_f^{n+1}$ (more precisely, $\Phi_f$) has singular points in general.
However, since $\Phi_f$ is an immersion on a tubular neighborhood of $\{0\}\times\Sigma^n$, singular points of the null-space may not be kept in mind when considering variations of $f$.

The following proposition states that for any variation in $\mathcal{N}_f^{n+1}$ can be regarded as a characteristic variation by using a coordinate transformation of the variation.

\begin{proposition}[A generalization of {\cite[Proposition~3.8]{Kishida24}}]\label{prop_exchange_variation}
  Let $\Sigma^n$ be a compact manifold with boundary, and $f:\Sigma^n\to M^{n+2}_1$ be a marginally trapped submanifold.
  Then, for any variation $G:(-\epsilon,\epsilon)\times\Sigma^n\to\mathcal{N}_f^{n+1}$ of $f$ in the null-space $\mathcal{N}_f^{n+1}$ with fixed boundary, there exist $\delta\in (0,\epsilon)$ and a characteristic variation $F:(-\delta,\delta)\times\Sigma^n\to M^{n+2}_1$ of $f$ with fixed boundary satisfying the following condition.
  \begin{itemize}
    \item The volume function $\textup{Vol}_F(t)$ of $F$ coincides with $\textup{Vol}_G(t)$, that is, the following equality holds:
    \[\textup{Vol}_F(t)=\int_{\Sigma^n}dV_t=\int_{\Sigma^n}d\widetilde{V}_t=\textup{Vol}_G(t)\qquad(t\in(-\delta,\delta)),\]
    where $dV_t$ and $d\widetilde{V}_t$ denote the volume forms of the metric induced by $F$ and $G$, respectively {\rm (cf. Figure 2)}.
  \end{itemize}
\end{proposition}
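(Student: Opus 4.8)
The plan is to exploit the product structure $\mathcal{N}_f^{n+1}=\R\times\Sigma^n$ and absorb the $\Sigma^n$-component of $G$ into a time-dependent reparametrization of $\Sigma^n$, leaving a variation that moves only along the null geodesics $t\mapsto\Phi_f(t,x)$; such a variation will turn out to be characteristic essentially for free, because the fibers of $\Phi_f$ are geodesics. Writing $G(t,x)=(\tau(t,x),\gamma(t,x))$ with $\tau:(-\epsilon,\epsilon)\times\Sigma^n\to\R$ and $\gamma:(-\epsilon,\epsilon)\times\Sigma^n\to\Sigma^n$, the condition $G(0,\cdot)=\iota_f$ gives $\tau(0,x)=0$ and $\gamma(0,x)=x$, while the fixed-boundary condition gives $\tau(t,x)=0$ and $\gamma(t,x)=x$ for all $x\in\partial\Sigma^n$.

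First I would show that $\gamma(t,\cdot):\Sigma^n\to\Sigma^n$ is a diffeomorphism fixing $\partial\Sigma^n$ for all sufficiently small $|t|<\delta$. Since $\gamma(0,\cdot)=\mathrm{id}_{\Sigma^n}$ and $\gamma$ is smooth, on the compact manifold $\Sigma^n$ the map $\gamma(t,\cdot)$ is $C^1$-close to the identity for small $t$, hence a diffeomorphism onto $\Sigma^n$ fixing the boundary. Setting $\sigma_t:=\gamma(t,\cdot)^{-1}$, smooth dependence of the inverse makes $(t,x)\mapsto\sigma_t(x)$ smooth, with $\sigma_0=\mathrm{id}$ and $\sigma_t|_{\partial\Sigma^n}=\mathrm{id}$. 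I then define $\hat{G}(t,x):=G(t,\sigma_t(x))=\bigl(\hat\tau(t,x),x\bigr)$ with $\hat\tau(t,x):=\tau(t,\sigma_t(x))$, and finally
\[F:=\Phi_f\circ\hat{G},\qquad F(t,x)=\Phi_f\bigl(\hat\tau(t,x),x\bigr)=\exp_{f(x)}\bigl(\hat\tau(t,x)\Ell_+(x)\bigr),\]
shrinking $\delta$ if necessary so that $\hat{G}$ stays in the tubular neighborhood of $\{0\}\times\Sigma^n$ on which $\Phi_f$ is an immersion. For each fixed $t$, $\hat{G}(t,\cdot)=G(t,\cdot)\circ\sigma_t$ differs from $G(t,\cdot)$ only by the diffeomorphism $\sigma_t$, so by the change-of-variables formula the two induced volumes agree; and since $\mathcal{N}_f^{n+1}$ carries the metric pulled back by $\Phi_f$, the $M^{n+2}_1$-volume of $F=\Phi_f\circ\hat{G}$ equals the $\mathcal{N}_f^{n+1}$-volume of $\hat{G}$. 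Chaining these gives $\mathrm{Vol}_F(t)=\mathrm{Vol}_G(t)$ on $(-\delta,\delta)$, and since $\sigma_t|_{\partial\Sigma^n}=\mathrm{id}$ forces $\hat\tau(t,x)=0$ for $x\in\partial\Sigma^n$, the variation $F$ has fixed boundary.

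It then remains to verify that $F$ is characteristic. Writing $V(s,x):=\partial_s\Phi_f(s,x)$ for the geodesic velocity, so that $V(0,x)=\Ell_+(x)$, one has $\partial_t F=\tfrac{\partial\hat\tau}{\partial t}\,V(\hat\tau,x)$, whence at $t=0$
\[X=\left.\partial_t F\right|_{t=0}=\tfrac{\partial\hat\tau}{\partial t}(0,x)\,\Ell_+(x),\]
which is admissible. Differentiating once more gives
\[\Nabla_{\partial_t}\partial_t F=\tfrac{\partial^2\hat\tau}{\partial t^2}\,V(\hat\tau,x)+\tfrac{\partial\hat\tau}{\partial t}\,\Nabla_{\partial_t}V(\hat\tau,x),\]
and the crucial point is that $\Nabla_{\partial_t}V(\hat\tau,x)=\tfrac{\partial\hat\tau}{\partial t}\,\Nabla_{\partial_s}V|_{s=\hat\tau}=0$ by the geodesic equation $\Nabla_{\partial_s}\partial_s\Phi_f=0$. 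Hence $X'=\left.\Nabla_{\partial_t}\partial_t F\right|_{t=0}=\tfrac{\partial^2\hat\tau}{\partial t^2}(0,x)\,\Ell_+(x)$ is again a multiple of $\Ell_+$, so $F$ is characteristic. The technical heart of the argument is the first step, namely producing the smooth boundary-fixing family $\sigma_t$ and controlling the region where $\Phi_f$ is an immersion; by contrast, the characteristic property of $F$ comes automatically from the geodesic equation, which is exactly what annihilates the cross-term in $X'$.
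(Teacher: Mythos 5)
Your proposal is correct and follows essentially the same route as the paper's own proof: you decompose $G$ into its $\R$- and $\Sigma^n$-components, invert the $\Sigma^n$-component (your $\sigma_t$ is the paper's $\beta(t,\cdot)$) to build $F(t,x)=\exp_{f(x)}\bigl(\hat\tau(t,x)\Ell_+(x)\bigr)$, obtain the volume identity from the change of variables together with $F_t\circ\gamma_t=\Phi_f\circ G_t$, and deduce the characteristic property from the fact that the fibers are (pre)geodesics. Your explicit computation of $X'$ via the geodesic equation is just a more detailed rendering of the paper's one-line pregeodesic argument.
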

\[
  \begin{tikzcd}
    (-\delta,\delta)\times\Sigma^n\arrow[rd,"G"]\arrow[rr,bend left,"F"] & \Sigma^n \arrow{r}{f} \arrow[d,"\iota_f"] & M^{n+2}_1 \\
    & \mathcal{N}_f^{n+1} \arrow[ru,"\Phi_f"']
  \end{tikzcd}
\]
\begin{center}
  Figure 2. The diagram related to Proposition~\ref{prop_exchange_variation}.
\end{center}
\begin{proof}
  This proposition can be shown by replacing $\vbp$ and $\vbq$ in the proof of \cite[Proposition~3.8]{Kishida24} with $f$ and $\Ell_+$, respectively, and using the same discussion:
  
  We can write $G(t,x)=(\tau(t,x),\alpha(t,x))$ where $\tau:(-\epsilon,\epsilon)\times\Sigma^n\to\R$ and $\alpha:(-\epsilon,\epsilon)\times\Sigma^n\to\Sigma^n$ are two maps.
  By using exactly the same discussion in the proof of \cite[Proposition~3.8]{Kishida24},
  there exists $\delta\in(0,\epsilon)$ such that $\alpha(t,\cdot):\Sigma^n\to \Sigma^n$ is a diffeomorphism for each $t\in (-\delta,\delta)$.
  So we obtain a smooth map $\beta:(-\delta,\delta)\times \Sigma^n\to \Sigma^n$ which satisfies $\beta(t,\alpha(t,x))=x$.
  Here, we consider the variation
  \[F:(-\delta,\delta)\times\Sigma^n\ni(t,x)\mapsto\exp_{f(x)}\bigl(\tau(t,\beta(t,x))\Ell_+(x)\bigr)\in M^{n+2}_1.\]
  Clearly, $F$ is a variation of $f$ with fixed boundary.
  Since the variational vector field $X$ with respect to $F$ is written as
  \[X_x=\left(\left.\frac{\partial}{\partial t}\tau(t,\beta(t,x))\right|_{t=0}\right)\Ell_+(x)\qquad (x\in\Sigma^n),\]
  $F$ is an admissible variation.
  For each $x\in\Sigma^n$, the map $\R\ni t\mapsto F(t,x)\in M^{n+2}_1$ is a pregeodesic by the definition of $F$.
  This implies that $X'_x$ is parallel to $\Ell_+(x)$, that is, $F$ is a characteristic variation.
  
  We next show that the volume function $\Vol_F(t)$ of $F$ coincides with $\Vol_G(t)$ of $G$.
  We fix $t\in(-\delta,\delta)$ and set $F_t:=F(t,\cdot),\;G_t:=G(t,\cdot)$ and $\alpha_t:=\alpha(t,\cdot)$.
  Then we have \[F_t\circ\alpha_t=\Phi_f\circ G_t.\]
  Since $\alpha_t$ is a diffeomorphism and $\mathcal{N}_f^{n+1}$ is equipped with the metric induced by $\Phi_f$,
  the volume of the metric induced by $F_t$ is equal to that of the metric induced by $G_t$.
  Therefore, we have $\Vol_F(t)=\Vol_G(t)$.
\end{proof}

Finally we prove the theorem in Introduction.

\begin{proof}[Proof of Theorem in Introduction]
  This theorem follows by applying Propositions~\ref{prop_first_variation}, \ref{prop_second_variation} and \ref{prop_exchange_variation}.
\end{proof}

\begin{acknowledgements}
  The author would like to express my deep gratitude to Atsufumi Honda, Shunsuke Ichiki, Masaaki Umehara, and the reviewer for valuable comments and suggestions.
\end{acknowledgements}

\end{document}